\definecolor{refkey}{gray}{.75}
\definecolor{labelkey}{gray}{.75}
\newcommand{\N}{\mathbb N}
\renewcommand{\thesubfigure}{\arabic{subfigure}}
\renewcommand{\@thesubfigure}{\tiny Figure \thesubfigure: \space}
\renewcommand{\p@subfigure}{}
\newtheorem{teo}{Theorem}[section]
\newtheorem{cor}[teo]{Corollary}
\newtheorem{pro}[teo]{Proposition}
\newtheorem{defn}[teo]{Definition}
\newtheorem{exmp}[teo]{Example}
\begin{document}

\baselineskip .6 cm

\title[Moving closer: contractive maps on discrete metric spaces and graphs]
{Moving closer: contractive maps on discrete metric spaces and graphs}


\author[F.~Zucca]{Fabio Zucca}
\address{F.~Zucca, Dipartimento di Matematica,
Politecnico di Milano,
Piazza Leonardo da Vinci 32, 20133 Milano, Italy.}
\email{fabio.zucca\@@polimi.it}

\begin{abstract}
We consider discrete metric spaces and we look for nonconstant contractions.
We introduce the notion of contractive map and we characterize the spaces
with nonconstant contractive maps. We provide some examples to discussion
the possible relations between contractions, contractive maps and constant
functions.
Finally we apply the main result to the
subgraphs of a nonoriented, connected graph.
\end{abstract}

\maketitle
\noindent {\bf Keywords}: discrete metric space, contractive map, contraction, fixed point, nonoriented graph

\noindent {\bf AMS subject classification}: 47H09, 54E99.

\section{Introduction}
\label{sec:intro}
\setcounter{equation}{0}

Let us consider the following question: a group of people,
living in the same country, but in different places, want to
move in such a way that the distance among any two of them strictly
decreases. Unfortunately they cannot all fit into one single house. Hence, the constraints are:
\begin{enumerate}[(i)]
\item anyone can either
move from his place to another one's (no matter what the owner of the
``destination house''  does) or stay where he is,
\item they cannot all move
to the same place.
\end{enumerate}
Is it possible for them to move?

This question is a particular case of a more interesting
problem involving discrete, possibly infinite, metric spaces (we will come back to
the original problem 
in Section~\ref{sec:conclusions} at the end of the paper).
We say that a couple $(X,d)$ is a metric space if $X$ is a set and $d$ is 
real function, called \textit{distance}, defined on $X \times X$ such that (1) $d(x,y)=0$ if and only if $x=y$ (for all $x,y \in X$) and
(2) $d(x,y) \le d(x,z)+d(y,z)$ for all $x,y,z \in X$.
We do not require the distance to be finite; if $d(x,y)=\infty$ we imagine that $x$ and $y$
belong to two disjoint components of the space (see below a more detailed discussion on this,
 in the case of a graph).
From (2), with $y=x$, and (1) we have that
$0=d(x,x) \le 2d(x,z)$ (for all $x,z \in X$) hence $d$ is nonnegative.
Moreover, again from (2), using $z=x$, and (1),
we have that $d(x,y) \le d(x,x)+d(y,x) =d(y,x)$ (for all $x,y \in X$), thus, by symmetry,
we have $d(x,y)=d(y,x)$.
For some basic properties of metric spaces see for instance \cite[Chapter 2]{cf:Rudin}.

An example of a discrete metric space is given by a (nonoriented) graph
(see for instance \cite{cf:Bollob}). Roughly speaking, a graph is a (finite or infinite)
collection of points, called \textit{vertices}, along with a set of pairs of vertices,
called \textit{edges}. We say that it is possible to move (in one step) from a vertex
$x$ to a vertex $y$ if and only if $(x,y)$ is an edge. A \textit{path} is a concatenation of edges,
and the minimum number of steps required to go from $x$ to $y$, say $d(x,y)$,
is called \textit{natural distance} from $x$ to $y$.
To be precise, in order to have an actual distance we have to assume that if we can move in one step
from $x$ to $y$ then we can also go back in one step, that is, if $(x,y)$ is an edge then $(y,x)$ is
an edge (in this case the graph is called \textit{nonoriented}). 
If we can go, in a finite number of steps, from $x$ to $y$ and back then we say that $x$ and $y$ \textit{communicate}.
The set of vertices which communicate with $x$
is called the \textit{connected component} 
containing $x$; if there is only one connected component, we say that the graph is \textit{connected}.
Of course, if two vertices belong to two disjoint connected components then the distance between them
is infinite. In Figure~\ref{fig:1} there is an example of an infinite graph which is connected if the dashed
arrow is an edge and it has two connected components otherwise (the double arrows mean that you can move forth
and back between the points, thus the graph is nonoriented).

\begin{figure}
   \subfigure[\tiny The graph $\N \cup \N$.]{\includegraphics[height=3cm]{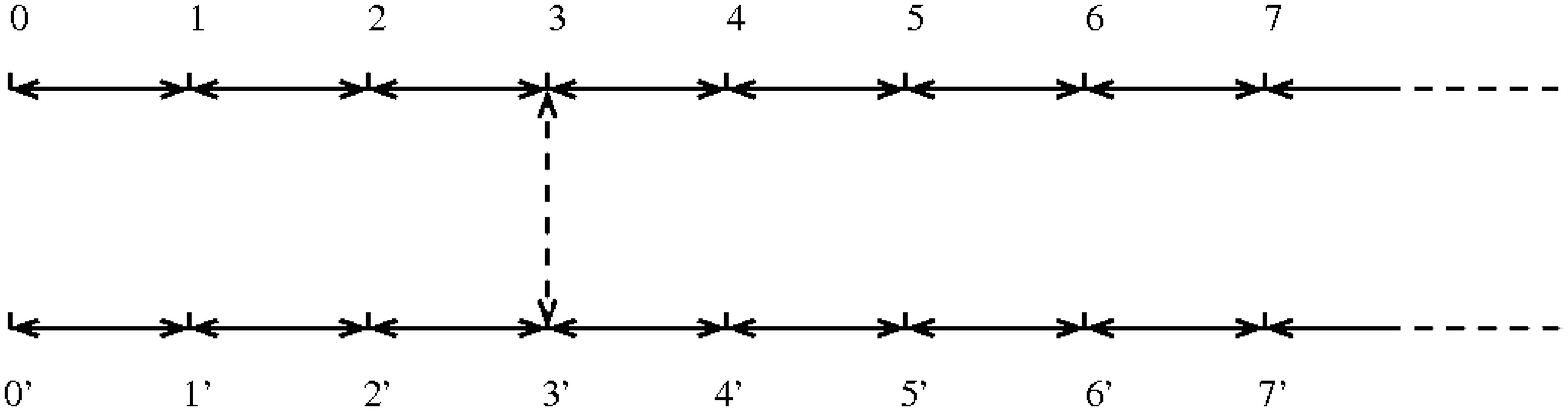}\label{fig:1}}
\end{figure}

The
central concept in this paper is the one of \textit{contractive map}
(see also \cite{cf:G1973, cf:handbook}).

\begin{defn}
Let $(X,d)$ and $(Y,\rho)$ be two metric spaces; a function $f:X \rightarrow Y$ is
\textsl{contractive} if and only if
for any $x,y \in X$, such that $x \not = y$,
we have $\rho(f(x),f(y)) < d(x,y)$. The map is called
a \textsl{contraction} if there exists $k <1$ such that
$\rho(f(x),f(y)) \le k d(x,y)$ for all $x,y \in X$.
\end{defn}

Roughly speaking a map is contractive if it (stricly) reduces the distances between points;
if the ratio of reduction has an upper bound which is (stricly) smaller than 1, then we have
a contraction.
Clearly, any contraction is a contractive map and the two
classes coincide
on finite metric spaces. We are (mainly) interested in the case $X=Y$ and $d=\rho$.
Observe that in our original problem, if we denote by $f(x)$ the new position
of the person who was at $x$ before,  what
we are looking for are nonconstant, contractive maps.
 Any constant map is trivially contractive;
when the converse is true?

The main result Theorem \ref{main} characterizes all the discrete
metric spaces, satisfying a certain property (see \eqref{ass:min} below)), which have
nonconstant contractive maps.
As a simply consequence of
Theorem~\ref{main} we will see that every contractive map on a
graph with the natural distance is a constant function if and only if the graph is connected.

\section{Main results and examples}
\label{sec:main}

Let us consider a discrete metric space $(X,d)$ (which is a
metric space such that for any given point $x\in X$ it is possible
to find $\varepsilon >0$ such that $d(x,y)\geq \varepsilon$ for all $y
\in X \setminus \{x\}$) and let us assume that
\begin{equation}\label{ass:min}
\exists x_0,y_0 \in X \hbox{ such that } 0< d(x_0,y_0) =
\min\{d(x,y): x,y \in X, x \not = y
\}
\end{equation}
(the existence of the minimum is implicitly assumed).
Roughly speaking, according to equation~\eqref{ass:min}, the distance either equals $0$ or it is
at least $d_0$; this implies that if $d(x,y)<d_0$ then $x=y$.
We define, once and for all, $d_0:=d(x_0,y_0)$ and we
introduce the equivalence relation $\sim$ defined by
$x \sim y$ if and only if there exists $\{x_i\}_{i=0}^n$ such that
$x_0=x$, $x_n=y$ and $d(x_i, x_{i+1})=d_0$ for any $i=0,1,\ldots, n-1$ (if $n>0$).
Hence, two points $x$ and $y$ are equivalent if and only if we can reach $y$ from
$x$ by performing a finite number of jumps of length $d_0$.
We denote by $[x]$, the equivalence class induced by $x \in X$,
that is, $[x]:=\{y:y \sim x\}$. As usual, the \textit{quotient space} $X/_\sim$ is the set of
the equivalence classes.
It is easy to show that, for any $x,y \in X$ such that $x \not = y$,
we have $y\in [x]$ if and only if there exists $z\in [x]$ satisfying
$d(y,z)=d_0$. We denote by $d_\sim$ the metric on $X/_\sim$ defined
by $d_\sim([x],[y]):= \inf_{x^\prime \in [x],y^\prime \in [y]} d(x,y)$.

Examples of discrete metric spaces satisfying equation~\eqref{ass:min}
are finite metric spaces and (nonoriented) graphs with their natural
distance; in the last case we have $d_0=1$ and the quotient space
$X/_\sim$ is the set
the connected components of the graph.

We are ready to state and prove the main result of this paper.

\begin{teo}\label{main}
Let $(X,d)$ be a metric space satisfying equation~\eqref{ass:min}; then TFAE
\begin{enumerate}[(i)]
\item  there exists a nonconstant contractive map on $X$;
\item there exists $x\in X$ such that $[x] \not = X$;
\item for every $x\in X$, we have $[x] \not = X$;
\item $\#X/_\sim>1$.
\end{enumerate}
Moreover there is a one-to-one map from the set of contractive maps $Cm(X)$ into
the set $Cm(X/_\sim;X)$ of contractive maps from ${X/_\sim}$ to $X$.
\end{teo}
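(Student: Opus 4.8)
The plan is to dispatch the four equivalences first and then treat the final injection. The equivalences among (ii), (iii) and (iv) are purely set-theoretic reformulations of the statement ``$\sim$ has more than one class'': if some $[x]\neq X$, then any $y\notin[x]$ gives $[y]\neq[x]$, so $\#X/_\sim>1$, and since a second class exists no single class can equal $X$; the remaining implications are immediate. So I would state these in one line and concentrate on the equivalence of (i) with (iv).

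For (i)$\Rightarrow$(iv) I would argue by contraposition. If $\#X/_\sim=1$, i.e.\ $X=[x]$ for every $x$, let $f$ be contractive. For any pair with $d(x,y)=d_0$, contractivity gives $d(f(x),f(y))<d_0$, and \eqref{ass:min} (every distance is either $0$ or at least $d_0$) forces $f(x)=f(y)$. Since $X$ is a single class, any two points are joined by a finite chain of $d_0$-steps, along which $f$ is constant; hence $f$ is constant and (i) fails. For the converse (iv)$\Rightarrow$(i) I would exhibit an explicit nonconstant contractive map. Fix one class $C=[z]$; since there is a second class, both $C$ and $X\setminus C$ are nonempty. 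Define $f$ to be $x_0$ on $C$ and $y_0$ on $X\setminus C$, where $x_0,y_0$ are the minimizers in \eqref{ass:min} (so $d(x_0,y_0)=d_0$ and $x_0\neq y_0$). The only nontrivial check is for $x\in C$, $y\notin C$: then $x\not\sim y$ and $x\neq y$, so $d(x,y)>d_0$ (equality would force $y\in[x]=C$), whence $d(f(x),f(y))=d_0<d(x,y)$; when $x,y$ lie on the same side, $f$ agrees on them and $0<d(x,y)$. Thus $f$ is contractive and nonconstant.

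For the final assertion I would use the natural map $f\mapsto\bar f$. The chain argument from the proof of (i)$\Rightarrow$(iv) shows that every contractive $f$ is constant on each equivalence class, so $f$ factors as $f=\bar f\circ\pi$ through the quotient projection $\pi\colon X\to X/_\sim$. Well-definedness and injectivity are then immediate: $\pi$ is surjective, so $\bar f\circ\pi=\bar g\circ\pi$ forces $\bar f=\bar g$, and the assignment $f\mapsto\bar f$ is one-to-one into the maps $X/_\sim\to X$.

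The delicate point—and the step I expect to be the main obstacle—is to verify that the image $\bar f$ genuinely lies in $Cm(X/_\sim;X)$, i.e.\ that $d(\bar f([x]),\bar f([y]))<d_\sim([x],[y])$ whenever $[x]\neq[y]$. Contractivity of $f$ yields $d(\bar f([x]),\bar f([y]))<d(x',y')$ for \emph{every} choice of representatives $x'\in[x]$, $y'\in[y]$, hence $d(\bar f([x]),\bar f([y]))\le d_\sim([x],[y])$ after passing to the infimum. Strictness is automatic as soon as the infimum defining $d_\sim([x],[y])$ is attained. The case of a non-attained infimum is the one to watch, since there the induced inequality can degenerate to equality; this is where I would concentrate the remaining effort, checking that in the cases of interest—finite spaces and graphs with the natural distance, where $d_\sim$ is realized as a genuine minimum—the obstruction cannot arise.
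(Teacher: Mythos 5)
Your handling of the four equivalences coincides with the paper's proof in all essentials: the same observation that contractivity plus \eqref{ass:min} forces $f(x)=f(y)$ along $d_0$-steps and hence constancy on classes (the paper runs this as (i)$\Rightarrow$(ii), you as the contrapositive of (i)$\Rightarrow$(iv), an immaterial difference), the same two-valued construction sending one class to $x_0$ and its complement to $y_0$, justified by the same key inequality $d(x,y)>d_0$ across distinct classes, and the same one-line set-theoretic dispatch of (ii), (iii), (iv). The factorization $f=\bar f\circ\pi$ and the injectivity argument are likewise the paper's $\phi(f)([x]):=f(x)$.

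The one genuine divergence is at the final assertion, and there you are \emph{more} careful than the paper, not less. The paper's proof derives exactly what you derive, namely $d(\phi(f)([x]),\phi(f)([y]))\le d_\sim([x],[y])$ by passing to the infimum over representatives, and then simply declares $\phi(f)$ contractive, silently upgrading $\le$ to the strict inequality that its own Definition requires. The obstruction you flag is real: strictness can fail when the infimum defining $d_\sim$ is not attained. For a concrete witness, take $X=A\cup B\cup\{c_1,c_2\}$ with $A=\{a_n\}_{n\ge0}$, $B=\{b_n\}_{n\ge0}$, internal distances $d(a_m,a_n)=d(b_m,b_n)=\min(|m-n|,3)$, cross distances $d(a_m,b_n)=\tfrac32+\tfrac1{n+1}$, $d(c_1,c_2)=\tfrac32$, and distance $3$ from each $c_i$ to $A\cup B$; one checks this is a metric satisfying \eqref{ass:min} with $d_0=1$, that the classes are $A$, $B$, $\{c_1\}$, $\{c_2\}$, and that $f$ defined by $f(A)=\{c_1\}$, $f(B)=\{c_2\}$, $f(c_1)=f(c_2)=c_1$ is contractive on $X$ (the cross pairs give $\tfrac32<\tfrac32+\tfrac1{n+1}$), yet $d(\phi(f)(A),\phi(f)(B))=\tfrac32=d_\sim(A,B)$, so $\phi(f)$ is not contractive on the quotient in the strict sense. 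Hence the last clause of the theorem is provable only if $Cm(X/_\sim;X)$ is read with the non-strict inequality, or under the additional hypothesis that $d_\sim$ is realized as a minimum --- exactly the repair you propose, and one that covers the finite spaces and graphs the paper actually uses. In short: everything you prove is correct and matches the paper; the step you leave open is not a gap in your argument but a gap in the paper's, and your instinct about where the difficulty sits is precisely right.
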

\begin{proof}
$(i) \Longrightarrow (ii)$. If $f$ is contractive and
$d(x,y)=d_0$ then $d(f(x),f(y))< d_0$ which implies, by equation~\eqref{ass:min},
$f(x)=f(y)$; hence $f|_{[x]}$, the function $f$ restricted to the class $[x]$, is a constant
function for any $x\in X$. Hence, if $f$ is nonconstant we have $[x] \neq X$.

$(ii) \Longrightarrow (iii)$. Just remember that $\{[x]\}_{x\in X}$
is a partition of $X$; hence for all $y \in X$ we have either $[y]=[x]$ or
$[y] \subseteq [x]^c \subsetneq X$.

$(iii) \Longrightarrow (i)$. We have
that $Y:=X \setminus [x_0] \not = \emptyset$; let us define
a function $f$ by
\begin{equation*}
f(x):=
\begin{cases}
x_0 & \hbox{if } x\in [x_0] \\
y_0 & \hbox{if } x\in Y, \\
\end{cases}
\end{equation*}
where $d(x_0,y_0)=d_0$.
It is just a matter of easy computation
to show that this is a nonconstant contractive map
(indeed for any $x \in [x_0]$, $y \in Y$ we have
$d(x,y)>d_0$).

$(ii) \Longleftrightarrow (iv)$.
It is an easy consequence of the fact that for all $y,x \in X$, $y \not \in [x]$ if and only if $[y] \neq [x]$
which, in turn, is equivalent to $[y] \cap [x]=\emptyset$.
\smallskip

Finally, it is easy to show that, given a contractive map $f$, the map $\phi(f)$ is well defined by $\phi(f)([x]):=f(x)$
(since $f$ is constant
on every class $[x]$). It is straightforward to show that $\phi$ is injective
and, clearly, being $f$ a contractive map,  $d(\phi(f)([x]), \phi(f)([y]))=d(f(x),f(y))=d(f(x^\prime),f(y^\prime)) \le d(x^\prime, y^\prime)$
for all $x^\prime \sim x$ and $y^\prime \sim y$.
This implies 
that $d(\phi(f)([x]), \phi(f)([y])) \le d_\sim([x],[y])$, thus $\phi(f)$ is a contractive map
from ${X/_\sim}$ to $X$.
\end{proof}

Let us observe that if the range of the distance ${\rm Ran} (d)$ is a finite
set (take for
instance $X$ finite) then any contractive map $f$ is
actually a contraction and there is $n_0 \in \N$ such that the $n$-th iteration $f^{(n)}$
is  a constant map for all $n \ge n_0$ and $f^{(n)}=f^{(n_0)}$.
Indeed, $d(f^{(n)}(x),f^{(n)}(y)) \le k^n d(x,y) \le k^n \max_{x^\prime ,y^\prime  \in X} d(x^\prime,y^\prime)$,
hence if $n_0 >\log(d_0/\max_{x^\prime ,y^\prime  \in X} d(x^\prime,y^\prime))/\log(k)$ then $f^{(n_0)}$
is constant (say, $f^{(n_0)}=x_\infty$ for all $x \in X$).
If $n >n_0$ then $f^{(n)}(x)=f^{(n_0)}(f^{(n-n_0)}(x))=x_\infty$. Clearly $f(x_\infty)=
f(f^{(n_0)}(x_\infty))=f^{(n_0)}(f(x_\infty))=x_\infty$,
that is, $x_\infty$ is (the unique) fixed point of $f$.

Indeed, it is possible to prove (see \cite[Theorem 9.3]{cf:Rudin}) that for any contraction $f$ in a (complete) metric space there exists
$x_\infty=\lim_n f^{(n)}(x)$ exists
and it is the unique fixed point for $f$, that is, $f(x_\infty)=x_\infty$.
A generic contractive map $f$ has at most one fixed point,
since $x=f(x)$, $y=f(y)$ and $x \neq y$ implies $0 \neq d(x,y) =d(f(x),f(y)) < d(x,y)$ which
is a contradiction.
Nevertheless, if ${\rm Ran} (d)$ is not a finite
set then the set of fixed points of $f$ might be empty as Example~\ref{ex:empty} shows.
In the language of our original problem, $x$ is a fixed point if and only if the member of
the group living at $x$ does not move.

Moreover if $\#X/_\sim =1$ then the three classes of
contractive maps $Cm(X)$, contractions
$Ct(X)$ and constant functions $Cnst(X)$ coincide.

\begin{cor}\label{cor:main}
Let $(X,d)$ be a metric space such that there are just
two couples satisfying equation~\eqref{ass:min}
(namely $(x_0,y_0)$ and $(y_0,x_0)$); if $\# X>2$ then
there exists a non constant contractive map.
\end{cor}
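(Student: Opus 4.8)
The plan is to reduce the statement to the characterization already furnished by Theorem~\ref{main}: it suffices to produce a single equivalence class $[x]$ with $[x]\neq X$, or equivalently to check that $\#X/_\sim>1$. The entire point is that the hypothesis, namely that only the unordered pair $\{x_0,y_0\}$ realizes the minimum in \eqref{ass:min}, forces the relation $\sim$ to have a completely transparent structure, after which condition (iv) of Theorem~\ref{main} becomes immediate.

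First I would describe the equivalence classes explicitly. Recall that $x\sim y$ holds precisely when $x$ and $y$ are joined by a finite chain of jumps of length exactly $d_0$. Since by hypothesis the only couples at distance $d_0$ are $(x_0,y_0)$ and $(y_0,x_0)$, every single admissible jump must go between $x_0$ and $y_0$. Hence $x_0\sim y_0$, and I claim $[x_0]=\{x_0,y_0\}$. The one small point requiring verification is that longer chains cannot create new equivalences: if $x_0=a_0,a_1,\dots,a_n$ is a chain with $d(a_i,a_{i+1})=d_0$ for all $i$, then each consecutive pair $\{a_i,a_{i+1}\}$ is a minimizing couple and must therefore coincide with $\{x_0,y_0\}$; this forces the chain to alternate $x_0,y_0,x_0,\dots$, so its endpoints lie in $\{x_0,y_0\}$. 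For any other point $z\in X\setminus\{x_0,y_0\}$ there is no admissible jump incident to $z$ at all, whence $[z]=\{z\}$ is a singleton.

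With the classes identified, I would conclude as follows. The assumption $\#X>2$ guarantees the existence of at least one point $z\in X\setminus\{x_0,y_0\}$; then $[x_0]=\{x_0,y_0\}$ and $[z]=\{z\}$ are two distinct classes, so $\#X/_\sim\geq 2>1$. Invoking the implication $(iv)\Rightarrow(i)$ of Theorem~\ref{main} then yields a nonconstant contractive map, which is exactly the claim. Concretely, the map supplied by the proof of Theorem~\ref{main} sends the class $[x_0]=\{x_0,y_0\}$ to $x_0$ and every remaining point to $y_0$.

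I expect the only delicate step to be the verification that $[x_0]=\{x_0,y_0\}$, i.e.\ that no longer $d_0$-chain can escape the pair $\{x_0,y_0\}$; everything else amounts to bookkeeping together with the citation of Theorem~\ref{main}.
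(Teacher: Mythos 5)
Your proof is correct and takes essentially the same route the paper intends: the corollary is stated without proof as a direct consequence of Theorem~\ref{main}, and your argument—identifying the classes as $[x_0]=\{x_0,y_0\}$ (via the alternation of any $d_0$-chain) and $[z]=\{z\}$ for $z\notin\{x_0,y_0\}$, then invoking $(iv)\Rightarrow(i)$—is precisely the intended application. The explicit map you extract (sending $\{x_0,y_0\}$ to $x_0$ and everything else to $y_0$) is exactly the one constructed in the proof of $(iii)\Rightarrow(i)$ of Theorem~\ref{main}.
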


\begin{exmp}\label{ex:empty}
 Consider $X=\N$ and define the distance as follows
\[
d(x,y):=
 \begin{cases}
  1 & (x,y) \in \{(0,1),(1,0)\} \\
 2  & (x,y) \in \{(0,2),(2,0),(1,2),(2,1)\} \\
 1+1/i & (x,y) \in \{(i,i+1),(i+1,i)\}\\
2+(1+1/2)+\cdots+(1+1/i) & (x,y) \in \{(0,i+1),(1,i+1),(i+1,0),(i+1,0)\} \\
(1+1/i)+\cdots+(1+1/j) & (x,y) \in \{(i,j+1),(j+1,i)\} \\
 \end{cases}
\]
where $2 \le i \le j$ (see Figure~\ref{fig:2} for a picture of a finite
portion of this space along with the distances between ``consecutive'' points).
In this case it is easy to prove that the following is a contracting
map without fixed points
\[
 f(x):=
\begin{cases}
 2 & x \in \{0,1\} \\
i+1 & x=i \ge 2. \\
\end{cases}
\]

\end{exmp}
\begin{figure}
   \subfigure[\tiny The metric space of Example~\ref{ex:empty}.]{\includegraphics[height=3cm]{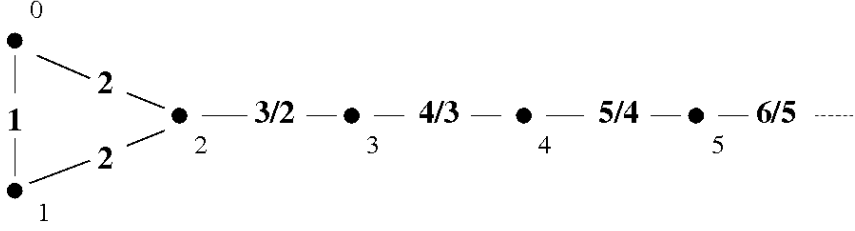}\label{fig:2}}
\end{figure}

In the following we compare the relations between
$Cm(X)$, $Ct(X)$ and $Cnst(X)$.
Clearly $Cm(X) \supseteq Ct(X) \supseteq Cnst(X)$; we provide examples
to show that, even if equation~\eqref{ass:min} holds,
all cases are possible. Given $A \subseteq X$, by
$\chi_A$ we mean the usual characteristic function
of $A$ (which equals $1$ on $A$ and $0$ elsewhere).
The reader is encouraged to verify that, in the following examples,
the spaces we define are indeed metric spaces.

\begin{exmp}\label{ex1}
Let $X:=\{0,1,2\}$ and $d$ be defined by $d(0,1)=d(1,0):=1$,
$d(1,2)=d(2,1):=2$,
$d(0,2)=d(2,0):=3$ and $0$ otherwise; then $f(x):=\chi_{\{2\}}(x)$ is a
nonconstant contraction.\hfill\break
Another example (in the infinite case) is the following.
Let $X=\N$ and define the distance by
\begin{equation*}
d(x,y):=
\begin{cases}
0 & \hbox{if } x=y, \\
1 & \hbox{if } (x,y) \in \{(0,1), (1,0)\}, \\
2 & \hbox{if } (x,y) \not \in \{(0,1), (1,0)\},\, x\not = y, \\
\end{cases}
\end{equation*}
then
$f(x):= \chi_{\{0,1\}^c}(x)$ is a nonconstant contraction.
In the previous examples $Cm(X)=Ct(X) \not = Cnst(X)$ since the range of the distance
is finite.
\end{exmp}

\begin{exmp}\label{ex2}
Let $X:=\N$ and let $d$ be defined as follows
\begin{equation*}
d(x,y):=
\begin{cases}
0 & \hbox{if } x=y, \\
1 & \hbox{if } (x,y) \in \{(0,1), (1,0)\}, \\
1+\frac{1}{y} & \hbox{if } 1\leq x < y \hbox{ or } x=0,\, y>1, \\
1+\frac{1}{x} & \hbox{if } 1\leq y < x \hbox{ or } y=0,\, x>1. \\
\end{cases}
\end{equation*}
In this case it is easy to check that this is a distance and that
\begin{equation*}
f(x):=
\begin{cases}
0 & \hbox{if } x\in \{0,1\}, \\
x+1 & \hbox{if } x>1, \\
\end{cases}
\end{equation*}
is a contractive map which is not a contraction while
\begin{equation*}
f(x):=
\begin{cases}
0 & \hbox{if } x \not = 2, \\
1 & \hbox{if } x=2, \\
\end{cases}
\end{equation*}
is a (nonconstant) contraction with 
$k$ equal to $2/3$.
Hence $Cm(X) \not = Ct(X)
\not = Cnst(X)$.
\end{exmp}

In Theorem~\ref{main}, to prove that every contractive map is a constant map, we require
that all the points belong to the same class, that is, we can reach $y$ from $x$ with
a finite number of consecutive steps of length $d_0$ (for any choice of $x,y \in X$,
$x \neq y$). To prove an analogous result for contractions we do not need such a
strong property: we simply require that any two different points can be joined by
a sequence of consecutive steps whose lengths are arbitrarily close to $d_0$.
The following result is used in Example~\ref{ex3} to construct a space where
every contraction is a constant map but there are nonconstant contractive maps.

\begin{pro}\label{pro:contrconst}
If 
equation \eqref{ass:min} holds and for all $x, x^\prime \in X$, $\varepsilon >1$
there exists $\{x_i\}_{i=1}^n \in X$ such that, $x_0=x$, $x_n=x^\prime$ and $d(x_i,x_{i+1}) \le \varepsilon d_0$ for all $i=0,1,\ldots
n-1$
then any contraction from $X$ into itself is constant.
\end{pro}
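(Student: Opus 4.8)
The plan is to exploit the strict inequality $k<1$ in the definition of a contraction together with the gap in the range of the distance provided by \eqref{ass:min}. Let $f$ be a contraction with constant $k<1$, and fix two arbitrary points $x,x^\prime \in X$; the goal is to show $f(x)=f(x^\prime)$. Since $k<1$ we have $1/k>1$, so I may choose a real number $\varepsilon$ with $1<\varepsilon<1/k$. This single choice of $\varepsilon$ is all I need from the hypothesis.

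With this $\varepsilon$ fixed, the connectivity assumption yields a finite chain $x=x_0,x_1,\ldots,x_n=x^\prime$ with $d(x_i,x_{i+1})\le \varepsilon d_0$ for every $i$. Applying the contraction step by step gives $d(f(x_i),f(x_{i+1}))\le k\,d(x_i,x_{i+1})\le k\varepsilon d_0$, and since $\varepsilon<1/k$ we obtain $k\varepsilon d_0<d_0$. By \eqref{ass:min} the distance between any two points is either $0$ or at least $d_0$, so $d(f(x_i),f(x_{i+1}))<d_0$ forces $f(x_i)=f(x_{i+1})$. Chaining these equalities along the path gives $f(x)=f(x_0)=f(x_n)=f(x^\prime)$. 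As $x$ and $x^\prime$ were arbitrary, $f$ is constant.

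The only delicate point is the choice of $\varepsilon$: the hypothesis supplies a chain for every $\varepsilon>1$, but the argument works only when the chain is fine enough that $k\varepsilon<1$, which collapses each image edge below the minimal positive distance $d_0$. Because $k$ is a fixed number strictly smaller than $1$, such an $\varepsilon$ always exists in the open interval $(1,1/k)$, and this is precisely where the proof uses that $f$ is a genuine contraction rather than merely a contractive map (for a contractive map one cannot bound the per-step ratio away from $1$, so the chain cannot be forced to collapse). I expect no further obstacle: everything else is the routine transitivity of equality along the chain.
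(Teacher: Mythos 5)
Your proof is correct and follows essentially the same route as the paper: the paper simply makes the concrete choice $\varepsilon := 2/(k+1)$, which lies in your interval $(1,1/k)$, and then runs the identical chain argument $d(f(x_i),f(x_{i+1})) \le k\varepsilon d_0 < d_0$ forcing $f(x_i)=f(x_{i+1})$ step by step. Your remark about where strictness of $k<1$ enters matches the paper's intent exactly.
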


\begin{proof}
Suppose that $d(f(x),f(y)) \le k d(x,y)$ for all $x,y \in X$ where $k<1$. Define $\varepsilon:= 2/(k+1)$;
it is clear that for all $i=0,1,\ldots, n-1$ we have that 
\[
d(f(x_i),f(x_{i+1})) \le k d_0 \varepsilon =\frac{2k}{k+1} d_0 < d_0
\]
which implies $d(f(x_i),f(x_{i+1}))=0$ and $f(x_i)=f(x_{i+1})$.
Thus $f(x)=f(x^\prime)$.
\end{proof}

\begin{exmp}\label{ex3}
Let $X:=\N$ and let $d$ be defined as follows
\begin{equation*}
d(x,y):=
\begin{cases}
0 & \hbox{if } x=y, \\
1 & \hbox{if } (x,y) \in \{(0,1), (1,0)\}, \\
1+\frac{1}{x(y-x)} & \hbox{if } 1\leq x < y, \\
1+\frac{1}{y(x-y)} & \hbox{if } 1\leq y < x, \\
1+\frac{1}{y} & \hbox{if } x=0,\, y>1, \\
1+\frac{1}{x} & \hbox{if } y=0,\, x>1. \\

\end{cases}
\end{equation*}
It is easy to check that this is a distance and that
the hypotheses of Proposition~\ref{pro:contrconst} are satisfied (since $\lim_{y \to \infty}d(x,y)=1=d_0$, we can always
take $n=2$ and $x_1 \in \N$ sufficiently large).
Hence in this
case any contraction is a constant function.
Nevertheless
\begin{equation*}
f(x):=
\begin{cases}
0 & \hbox{if } x\in \{0,1\}, \\
x+1 & \hbox{if } x>1, \\
\end{cases}
\end{equation*}
is a (nonconstant) contractive map. Hence
$Cm(X) \not = Ct(X) = Cnst(X)$.
\end{exmp}
Let us consider now a nonoriented graph 
along with
its natural distance
$d$. 
If $A \subseteq X$, $d_A$ is the natural distance restricted to $A \times A$
and $d^A_0:= \min \{d_A(x,y):x,y\in A, x\not = y\}$, then
condition (ii) of Theorem~\ref{main} is equivalent to the existence,
given any couple of vertices $x,y \in A$, of a finite sequence
$\{x=:x_0, \ldots, x_n:=y\}$ of vertices in $A$
such that $d_A(x_i,x_{i+1})=d_0$
for all $i=0,\ldots, n-1$. In particular if $d^A_0:=1$ then any
contractive map on $A$ is constant if and only if $A$ is
a connected subgraph. By taking $A=X$ we have that there are no
nonconstant, contractive maps on $X$ if and only if $X$ is a connected
graph.

\section{Conclusions}
\label{sec:conclusions}

We come back to our original question: may the group of people move
according to the rules (i) and (ii) as stated in Section~\ref{sec:intro}?
We may reasonably suppose that the group is finite and with cardinality
strictly greater than $2$ (if there are just $2$ people then any contractive map
is constant and they cannot move). Hence the metric space is finite
and equation~\eqref{ass:min} is fulfilled. Moreover one can assume
that to different couples of places correspond different distances
(i.e.~if $(x,y)\not = (x_1,y_1)$ and $(x,y)\not = (y_1,x_1)$
then $d(x,y) \not = d(x_1,y_1)$ unless $x=y$ and $x_1=y_1$). Under
these assumptions, according to Corollary~\ref{cor:main}, we
know that the group can move. Moreover, since
the contractive map in this case is a contraction, there exists
one (and only one) person who does not move at all (this is the fixed point of
the contraction).

\end{document}